\newtheorem{theorem}{Theorem}
\newtheorem{problem}[theorem]{Problem}
\newtheorem{lemma}[theorem]{Lemma}
\newtheorem{observation}[theorem]{Observation}
\newcommand{\eps}{\varepsilon}
\newcommand{\ch}{\text{ch}}
\newcommand{\brm}[1]{\operatorname{#1}}
\title{Flexibility of planar graphs of girth at least six\thanks{Work on this paper was supported by project 17-04611S (Ramsey-like aspects of graph coloring) of Czech Science Foundation.}}
\author{Zden\v{e}k Dvo\v{r}\'ak\thanks{Computer Science Institute (CSI), Charles University in Prague.
E-mail: \protect\href{mailto:rakdver@iuuk.mff.cuni.cz}{\protect\nolinkurl{rakdver@iuuk.mff.cuni.cz}}.}\and
Tom\'a\v{s} Masa\v{r}\'ik\thanks{Department of Applied Mathematics, Charles University in Prague.
E-mail: \protect\href{mailto:masarik@kam.mff.cuni.cz}{\protect\nolinkurl{masarik@kam.mff.cuni.cz}}. Partially supported by the project SVV–2017–260452.}\and
Jan Mus\'ilek\thanks{Department of Applied Mathematics, Charles University in Prague.
E-mail: \protect\href{mailto:stinovlas@kam.mff.cuni.cz}{\protect\nolinkurl{stinovlas@kam.mff.cuni.cz}}.}\and
Ond\v{r}ej Pangr\'ac\thanks{Computer Science Institute (CSI), Charles University in Prague.
E-mail: \protect\href{mailto:pangrac@iuuk.mff.cuni.cz}{\protect\nolinkurl{pangrac@iuuk.mff.cuni.cz}}.}}
\date{}
\begin{document}
\maketitle

\begin{abstract}
Let $G$ be a planar graph with a list assignment $L$.  Suppose a preferred color is given for some of the
vertices.  We prove that if $G$ has girth at least six and all lists have size at least three, then there exists
an $L$-coloring respecting at least a constant fraction of the preferences.
\end{abstract}

\section{Introduction}

In a proper graph coloring, we want to assign to each vertex of a graph one of a fixed number of colors
in such a way that adjacent vertices receive distinct colors.
Dvo\v{r}\'ak, Norin, and Postle~\cite{requests} (motivated by a similar notion considered by Dvo\v{r}\'ak and Sereni~\cite{dotri})
introduced the following graph coloring question:
If some vertices of the graph have a preferred color, is it possible to properly color the graph so that
at least a constant fraction of the preferences is satisfied?  As it turns out, this question is trivial in
the ordinary proper coloring setting with a bounded number of colors (the answer is always positive~\cite{requests}), but
in the list coloring setting this gives rise to a number of interesting problems.

A \emph{list assignment} $L$ for a graph $G$ is a function that to each vertex $v\in V(G)$ assigns
a set $L(v)$ of colors, and an \emph{$L$-coloring} is a proper coloring $\varphi$ such that $\varphi(v)\in L(v)$ for all $v\in V(G)$.
A graph $G$ is \emph{$k$-choosable} if $G$ is $L$-colorable from every assignment $L$ of lists of size at least $k$.
A \emph{weighted request} is a function $w$ that to each pair $(v,c)$ with $v\in V(G)$ and $c\in L(v)$
assigns a nonnegative real number.  Let $w(G,L)=\sum_{v\in V(G),c\in L(v)} w(v,c)$.
For $\eps>0$, we say that $w$ is \emph{$\eps$-satisfiable} if there exists an $L$-coloring $\varphi$ of $G$ such that
$$\sum_{v\in V(G)} w(v,\varphi(v))\ge\eps\cdot w(G,L).$$
An important special case is when at most one color can be requested at each vertex and all such colors have the
same weight (say $w(v,c)=1$ for at most one color $c\in L(v)$, and $w(v,c')=0$ for any other color $c'$):
A \emph{request} for a graph $G$ with a list assignment $L$ is a function $r$ with $\brm{dom}(r)\subseteq V(G)$
such that $r(v)\in L(v)$ for all $v\in\brm{dom}(r)$.  For $\eps>0$, a request $r$ is \emph{$\eps$-satisfiable}
if there exists an $L$-coloring $\varphi$ of $G$ such that $\varphi(v)=r(v)$ for at least $\eps|\brm{dom}(r)|$ vertices $v\in\brm{dom}(r)$.
Note that in particular, a request $r$ is $1$-satisfiable if and only if the precoloring given by $r$ extends to an $L$-coloring of $G$.
We say that a graph $G$ with the list assignment $L$ is \emph{$\eps$-flexible} if every request is $\eps$-satisfiable,
and it is \emph{weighted $\eps$-flexible} if every weighted request is $\eps$-satisfiable (of course, weighted $\eps$-flexibility implies $\eps$-flexibility).

Dvořák, Norin and Postle~\cite{requests} established basic properties of the concept and proved that several interesting graph classes are flexible:
\begin{itemize}
\item For every $d\ge 0$, there exists $\varepsilon>0$ such that $d$-degenerate graphs with assignments of lists of size $d+2$ are weighted $\eps$-flexible.
\item There exists $\varepsilon>0$ such that every planar graph with assignment of lists of size $6$ is $\eps$-flexible.
\item There exists $\varepsilon>0$ such that every planar graph of girth at least five with assignment of lists of size $4$ is $\eps$-flexible.
\end{itemize}
They also raised a number of interesting questions, including the following one.
\begin{problem}\label{prob-dnp}
Does there exists $\varepsilon>0$ such that every planar graph $G$ and assignment $L$ of lists of size
\begin{itemize}
\item[(a)] five in general,
\item[(b)] four if $G$ is triangle-free,
\item[(c)] three if $G$ has girth at least five
\end{itemize}
is (weighted) $\eps$-flexible?
\end{problem}
Let us remark that the proposed list sizes match the best possible bounds guaranteeing the existence of a coloring
from the lists~\cite{voigt1993,voigt1995}.  In~\cite{req-trfree}, we gave a positive answer to the part (b).

In this note, we consider coloring from lists of size three, cf. the part (c). We prove that girth at least six is sufficient to ensure
flexibility with lists of size three; this improves upon the result
of Dvořák, Norin and Postle~\cite{requests} who proved that girth at least $12$ is sufficient.
\begin{theorem}\label{thm:sixGirth}
There exists $\eps>0$ such that each planar graph of girth at least six with assignment of lists of size three is weighted $\eps$-flexible.
\end{theorem}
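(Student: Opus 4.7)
The plan is to follow the probabilistic discharging framework of Dvo\v{r}\'ak, Norin and Postle~\cite{requests}: I would construct, for every planar graph $G$ of girth at least six and every list assignment $L$ with $|L(v)|\ge 3$, a probability distribution over $L$-colorings of $G$ such that for every vertex $v$ and every color $c\in L(v)$,
$$\Pr[\varphi(v)=c]\ge\eps,$$
where $\eps>0$ is an absolute constant. Such a distribution immediately yields weighted $\eps$-flexibility, since for any weighted request $w$ the expected satisfied weight is $\sum_{v,c}w(v,c)\Pr[\varphi(v)=c]\ge\eps\,w(G,L)$, so some $L$-coloring achieves at least this bound.

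First, I would use a discharging argument to exhibit a finite list of unavoidable \emph{reducible configurations}. The Euler bound for planar graphs of girth at least six gives $|E(G)|\le\tfrac{3}{2}(|V(G)|-2)$, so the average degree is strictly below three, and in fact a standard charging with initial charge $\deg(v)-3$ on vertices and $\ell(f)-3$ on faces (where $\ell(f)$ denotes the face length) leaves negative total charge. After redistributing one unit of charge along each incidence with a vertex of degree two, one of a few local structures must occur: a vertex of degree at most one, a pair of adjacent vertices of degree two, a short "thread" of degree-two vertices between low-degree branch vertices, or a degree-two vertex whose two neighbors share a short face. The exact catalogue has to be tuned so that every configuration admits a flexibility-preserving reduction.

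Second, for each configuration $C$ I would prove a reduction lemma: given a distribution on $L$-colorings of an appropriately smaller graph $G'$ (obtained from $G$ by deleting $C$, possibly with a mild modification of the list or identification of some boundary vertices) satisfying the lower-bound property with constant $\eps'$, one extends it to a distribution on $L$-colorings of $G$ that still satisfies the lower-bound property with some constant $\eps\le\eps'$ \emph{not depending on} $G$. The extension is local: outside $C$ provides at most a constant number of forbidden colors at each $v\in C$, and since $|L(v)|\ge 3$, a carefully chosen random coloring of $C$ (conditioned on the external coloring) puts positive probability on each target color. An induction on $|V(G)|$ then gives the theorem.

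The main obstacle is the second step for configurations in which several removed degree-two vertices have external neighbors that are close together in $G'$, so that the external colors are correlated under the inductive distribution. One has to argue that these correlations cannot simultaneously concentrate the forbidden colors at all vertices of $C$, which requires choosing the set of reducible configurations carefully: the removed set should be small and its "attachment" to $G'$ should involve vertices whose pairwise distance in $G'$ can be controlled using the girth-six assumption. A secondary technical point is that naive deletion can create short faces or multiple edges, so the class used for induction should be slightly enlarged, e.g.\ allowing a bounded number of vertices with prescribed (shorter) boundary lists, so that $G'$ remains within the inductive class.
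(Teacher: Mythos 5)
Your high-level strategy is the right one and matches the paper's: reduce weighted flexibility to the existence, in every induced subgraph, of a small reducible configuration (this is exactly the content of the black-box Lemma~\ref{lemma-redu} from~\cite{requests,req-trfree}, so you do not need to re-derive the probability distribution or worry about correlations and boundary identifications --- the conditions (FIX) and (FORB) are purely local colorability statements and the framework handles the rest). The genuine gap is in the part that constitutes the actual content of the theorem: the discharging and the catalogue of configurations. First, your proposed initial charge $\deg(v)-3$ and $\ell(f)-3$ does not give negative total charge: by Euler's formula the sum is $4|E|-3|V|-3|F|=|E|-6$, which is positive for all but tiny graphs. The correct normalization for girth six is $2\deg(v)-6$ on vertices and $|f|-6$ on faces, summing to $-12$. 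Second, and more seriously, ``redistributing one unit of charge along each incidence with a vertex of degree two'' cannot work: the hard case is a degree-two vertex all of whose nearby vertices have degree three and all of whose nearby faces are hexagons, so that every potential donor within constant distance has charge exactly $0$. The paper must send charge along paths of length up to $7$ whose internal vertices have degree three (rules (T0)--(T2)), and the unavoidable configurations correspondingly include not only threads between two degree-two vertices and stars at vertices of degree $\ge 4$, but also two adjacent $6$-faces with a degree-two vertex and three $6$-faces meeting at a vertex with all vertices of degree three (Lemmas~\ref{lemma-2faces} and~\ref{lemma-3faces}); the latter face-based configurations are essentially absent from your catalogue, yet they are what saves the final case of the charge analysis.

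A further point you would run into: for lists of size three, a lone degree-two vertex (or one end of a thread) is \emph{not} reducible, because (FORB) forces $\deg_H(v)+3-\deg_G(v)\ge 2$, i.e.\ the configuration must contain all but at most one neighbor of each of its vertices. This is why the reducible subgraphs are paths \emph{between two} degree-two vertices, full stars, or unions of entire face boundaries, and why ``tuning the catalogue'' is the whole difficulty rather than a detail. Finally, to make the face-based configurations available one needs every $6$-cycle to bound a face, which the paper arranges by passing to a $2$-connected block and to the interior of a minimal non-facial $6$-cycle (with the cycle playing the role of the excluded set $Z$ and supplying extra charge); your sketch of the induction does not anticipate this reduction, and without it Lemmas~\ref{lemma-2faces} and~\ref{lemma-3faces} can fail. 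So while the skeleton is right, the proposal omits the parts where the proof actually lives.
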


Note that while it is trivial to prove that planar graphs of girth at least six are 3-choosable, Theorem~\ref{thm:sixGirth} is
substantially more complicated to establish.  As the proof that planar graphs of girth five are 3-choosable~\cite{thomassen1995-34} is quite involved,
we suspect proving the part (c) of Problem~\ref{prob-dnp} (if true) would be hard.

\section{Preliminaries}

Let $H$ be a graph. For a positive integer $d$, a set $I\subseteq V(H)$ is \emph{$d$-independent} if the distance between any distinct vertices
of $I$ in $H$ is greater than $d$.  Let $1_I$ denote the characteristic function of $I$, i.e., $1_I(v)=1$ if $v\in I$ and $1_I(v)=0$ otherwise.
For functions that assign integers to vertices of $H$, we define addition and subtraction in the natural way,
adding/subtracting their values at each vertex independently.
For a function $f:V(H)\to\mathbb{Z}$ and a vertex $v\in V(H)$, let $f\downarrow	v$ denote the function such that $(f\downarrow v)(w)=f(w)$ for $w\neq v$
and $(f\downarrow v)(v)=1$.  A list assignment $L$ is an \emph{$f$-assignment} if $|L(v)|\ge f(v)$ for all $v\in V(H)$.

Suppose $H$ is an induced subgraph of another graph $G$.  For an integer $k\ge 3$, let $\delta_{G,k}:V(H)\to\mathbb{Z}$
be defined by $\delta_{G,k}(v)=k-\deg_G(v)$ for each $v\in V(H)$.  For another integer $d\ge 0$, we say that $H$ is a \emph{$(d,k)$-reducible}
induced subgraph of $G$ if
\begin{itemize}
\item[(FIX)] for every $v\in V(H)$, $H$ is $L$-colorable for every $((\deg_H+\delta_{G,k})\downarrow v)$-assignment $L$, and
\item[(FORB)] for every $d$-independent set $I$ in $H$ of size at most $k-2$, $H$ is $L$-colorable for every $(\deg_H+\delta_{G,k}-1_I)$-assignment $L$.
\end{itemize}
Note that (FORB) in particular implies that $\deg_H(v)+\delta_{G,k}(v)\ge 2$ for all $v\in V(H)$.
Before we proceed, let us give an intuition behind these definitions.  Consider any assignment $L_0$ of lists of size $k$
to vertices of $G$. The function $\delta_{G,k}$ describes how many more (or fewer) available colors each vertex has compared to its degree.
Suppose we $L_0$-color $G-V(H)$, and let $L'$ be the list assignment for $H$ obtained from $L_0$ by removing from the list of each vertex
the colors of its neighbors in $V(G)\setminus V(H)$.  In $L'$, each vertex $v\in V(H)$ has at least $\deg_H(v)+\delta_{G,k}(v)$
available colors, since each color in $L_0(v)\setminus L'(v)$ corresponds to a neighbor of $v$ in $V(G)\setminus V(H)$.
Hence, (FIX) requires that $H$ is $L'$-colorable even if we prescribe the color of any single vertex of $H$,
and (FORB) requires that $H$ is $L'$-colorable even if we forbid to use one of the colors on the $d$-independent set $I$.

The following lemma is implicit in Dvo\v{r}\'ak et al.~\cite{requests} and appears explicitly in~\cite{req-trfree}.
\begin{lemma}\label{lemma-redu}
For all integers $g,k\ge 3$ and $b\ge 1$, there exists $\eps>0$ as follows.  Let $G$ be a graph of girth at least $g$. 
If for every $Z\subseteq V(G)$,
the graph $G[Z]$ contains an induced $(g-3,k)$-reducible subgraph with at most $b$ vertices, then $G$ with any assignment of lists
of size $k$ is weighted $\eps$-flexible.
\end{lemma}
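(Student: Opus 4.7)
My plan is to combine an iterative peeling decomposition with a randomised colouring in reverse order and then to conclude by linearity of expectation. Starting from $Z_0:=V(G)$, I would repeatedly invoke the hypothesis: given $Z_{i-1}\ne\emptyset$, pick an induced $(g-3,k)$-reducible subgraph $H_i$ of $G[Z_{i-1}]$ with $|V(H_i)|\le b$ and set $Z_i:=Z_{i-1}\setminus V(H_i)$. This yields a partition $V(G)=V(H_1)\sqcup\cdots\sqcup V(H_m)$, and a useful bookkeeping consequence of (FORB) (via the remark $\deg_{H_i}(v)+\delta_{G[Z_{i-1}],k}(v)\ge 2$ applied with the ambient graph $G[Z_{i-1}]$) is that every $v\in V(H_i)$ has at most $k-2$ neighbours in $V(H_{i+1})\cup\cdots\cup V(H_m)$; this is the only degree information about $G$ we will ever need.

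Given $L$ with $|L(v)|=k$ and a weighted request $w$, I would colour the pieces in the reverse order $H_m,\ldots,H_1$ by random extension. At step $i$, the already-coloured vertices are exactly the external vertices of $V(H_i)$ in $G[Z_{i-1}]$, so after deleting colours used on them the residual list $L_i(v)$ at $v\in V(H_i)$ satisfies $|L_i(v)|\ge\deg_{H_i}(v)+\delta_{G[Z_{i-1}],k}(v)$, and both (FIX) and (FORB) apply. I would sample the $L_i$-colouring $\varphi_i$ of $H_i$ from a convex combination $\alpha\cdot\mu^{\mathrm{FIX}}+(1-\alpha)\cdot\mu^{\mathrm{FORB}}$ with $\alpha\in(0,1)$ to be fixed later: under $\mu^{\mathrm{FIX}}$, draw $v^*\in V(H_i)$ and $c^*\in L_i(v^*)$ uniformly and use (FIX) to obtain an $L_i$-colouring with $\varphi_i(v^*)=c^*$; under $\mu^{\mathrm{FORB}}$, sample a colour $c^\dagger$ and a $(g-3)$-independent set $I\subseteq V(H_i)$ with $|I|\le k-2$ at random and use (FORB) to obtain an $L_i$-colouring that avoids $c^\dagger$ on $I$. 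Ties are broken deterministically, defining a random $L$-colouring $\varphi$ of $G$.

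The heart of the argument is proving a uniform lower bound $\Pr[\varphi(v)=c]\ge\eps$ for some $\eps=\eps(g,k,b)>0$, every $v\in V(G)$, and every $c\in L(v)$; linearity of expectation then gives
$$\bb{E}\Bigl[\sum_{v\in V(G)}w(v,\varphi(v))\Bigr]=\sum_{v,c}w(v,c)\Pr[\varphi(v)=c]\ge\eps\cdot w(G,L),$$
so some realisation of $\varphi$ witnesses weighted $\eps$-flexibility. For $v\in V(H_i)$ and $c\in L(v)$, the $\mu^{\mathrm{FIX}}$ rule contributes at least $\tfrac{\alpha}{bk}\Pr[c\in L_i(v)]$ to $\Pr[\varphi(v)=c]$, so the task reduces to bounding $\Pr[c\in L_i(v)]$ below by a constant, which by union bound over the at most $k-2$ already-coloured neighbours of $v$ reduces in turn to a matching uniform \emph{upper} bound on each $\Pr[\varphi(u)=c]$. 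The $\mu^{\mathrm{FORB}}$ rule is what delivers that upper bound: by forbidding a random colour on a $(g-3)$-independent set of size up to $k-2$, any fixed pair $(u,c)$ gets damped with constant probability, keeping marginals uniformly below $1-\eta$ for an $\eta>0$ large enough to absorb the $(k-2)$-fold union-bound loss.

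The main obstacle is the coupled reverse-order induction that closes the two-sided marginal bounds simultaneously: one has to choose $\alpha$ and the distributions on $(v^*,c^*)$ and $(c^\dagger,I)$ so that the lower bound of order $\tfrac{1}{bk}$ from $\mu^{\mathrm{FIX}}$ and the upper bound of order $\tfrac{1}{k-2}$ from $\mu^{\mathrm{FORB}}$ both survive through every step, with constants depending only on $g$, $k$, and $b$. The girth hypothesis enters exclusively through (FORB) — the random $I$ must be forced to contain any prescribed $u$ with sufficient probability while respecting $(g-3)$-independence — and the uniform size bound $|V(H_i)|\le b$ is what prevents the combinatorial constants from degrading down the induction.
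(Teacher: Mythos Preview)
The paper itself does not prove this lemma; it is cited from \cite{requests} (where it is implicit) and \cite{req-trfree} (where it is stated explicitly), so there is no in-paper argument to compare against. Your high-level scheme---peel $G$ iteratively into bounded reducible pieces $H_1,\dots,H_m$, colour them in reverse order by a randomised extension governed by (FIX) and (FORB), and finish via linearity of expectation from a uniform lower bound on $\Pr[\varphi(v)=c]$---is indeed the method of those references, and you have correctly flagged the coupled two-sided marginal bound as the technical crux.

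One place where your sketch would not close as written: your union bound over the at most $k-2$ already-coloured neighbours of $v$ needs each marginal $\Pr[\varphi(u)=c]$ to lie below $\tfrac{1}{k-2}$, and a (FORB) mixture that merely forbids a random colour on a random $(g{-}3)$-independent set cannot push individual marginals that low once $k\ge 4$ (each application of (FORB) only rules out one colour at $u$ with probability bounded by a constant much smaller than $\tfrac{k-3}{k-2}$). In the cited proofs this is sidestepped by taking the set $I$ in the (FORB) branch not at random among all admissible independent sets, but as the set of $H_j$-neighbours of a uniformly chosen vertex $y$ still awaiting its colour; the girth hypothesis is exactly what guarantees that this $I$ is $(g{-}3)$-independent in $H_j$ and has size at most $k-2$. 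With that targeted choice one controls, for each later piece separately, the probability that \emph{all} of $v$'s neighbours in that piece avoid $c$, and the induction on $\Pr[c\in L_i(v)]$ closes without the lossy union bound.
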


We also use the following well-known fact.
\begin{lemma}[Thomassen~\cite{Thomassen97}]\label{lemma-gallai}
Let $G$ be a connected graph and $L$ a list assignment such that $|L(u)|\ge \deg(u)$ for all
$u\in V(G)$.  If either there exists a vertex $u\in V(G)$ such that $|L(u)|>\deg(u)$, or
some $2$-connected component of $G$ is neither complete nor an odd cycle, then $G$ is $L$-colorable.
\end{lemma}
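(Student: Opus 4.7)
The plan is to prove the lemma by induction on $|V(G)|$, splitting into two principal cases according to whether some vertex has strict list-surplus.

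\textbf{Case A: some $u_0 \in V(G)$ satisfies $|L(u_0)| > \deg(u_0)$.} Root a spanning tree $T$ at $u_0$ and enumerate the vertices $v_1, \ldots, v_n = u_0$ so that each non-root vertex precedes its $T$-parent (reverse breadth-first order). Color greedily: for $i < n$, at most $\deg(v_i) - 1$ neighbors of $v_i$ have been colored (since the $T$-parent is still uncolored), so $|L(v_i)| \ge \deg(v_i)$ leaves at least one admissible color. At $u_0$, all $\deg(u_0)$ neighbors are colored but $|L(u_0)| > \deg(u_0)$, so a color remains.

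\textbf{Case B: $|L(v)| = \deg(v)$ for every $v$, and some block $B$ is neither complete nor an odd cycle.} I would consider a minimum counterexample $(G,L)$. First, $G$ must be 2-connected: otherwise write $G = G_1 \cup G_2$ with $G_1 \cap G_2 = \{v^*\}$, taking $G_1$ to contain the bad block $B$; in $G_1$ the cut vertex $v^*$ has $|L(v^*)| = \deg_G(v^*) > \deg_{G_1}(v^*)$, so Case~A applies by minimality to give an $L$-coloring of $G_1$, and one then extends across $v^*$ into $G_2$. The extension is the delicate point: one must choose among the possible colors for $v^*$ so that the residual instance on $G_2 - v^*$ either has a surplus vertex or still contains a bad block, and then re-apply Case~A or induction.

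Assuming $G$ itself is 2-connected, not complete, and not an odd cycle, I split further. If $G$ is an even cycle, the standard 2-choosability argument for $C_{2k}$ completes the proof. Otherwise $G$ is 2-connected and not a cycle, hence contains a vertex of degree at least three; the key structural fact (provable via an ear-decomposition) is that $G$ contains a pair of non-adjacent vertices $u_1, u_2$ with a common neighbor $w$ such that $G - \{u_1, u_2\}$ is connected. If $L(u_1) \cap L(u_2) \ne \emptyset$, color both $u_1, u_2$ with a common color $c$; then $w$ loses one color but two degrees, giving $w$ strict surplus in $G - \{u_1, u_2\}$, and Case~A finishes the job. Otherwise $L(u_1)$ and $L(u_2)$ are disjoint, and I pick $c \in L(u_1)$ so as to create surplus at some common neighbor (using $|L(u_1)| \ge 2$ and pigeonhole over $L(w)$), again reducing to Case~A.

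The main obstacle is the structural claim in the 2-connected non-cycle case: producing the pair $\{u_1, u_2\}$ of non-adjacent vertices with common neighbor whose removal preserves connectivity, and the associated bookkeeping to ensure that after the reduction some vertex truly has strict list-surplus. The secondary difficulty is the cut-vertex reduction, where extending a coloring across a cut vertex into a block that happens to be a clique or odd cycle requires a careful choice of color at the cut vertex to avoid an obstruction arising from the Gallai structure of that block.
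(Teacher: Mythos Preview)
The paper does not prove this lemma; it is quoted as a well-known fact with a citation to Thomassen, so there is no in-paper argument to compare against. Your Case~A is the standard greedy argument along a spanning tree and is correct. The genuine gap is in Case~B, in the subcase where $L(u_1)\cap L(u_2)=\emptyset$. Your plan there is to pick $c\in L(u_1)$ missing from $L(w)$ so that $w$ gains strict surplus once $u_1$ is coloured, but no pigeonhole forces such a $c$ to exist: nothing prevents $L(u_1)\subseteq L(w)$, or even $L(u_1)\subseteq L(w')$ for every neighbour $w'$ of $u_1$. For a concrete obstruction take $G=K_{3,3}$ with parts $\{a_1,a_2,a_3\}$, $\{b_1,b_2,b_3\}$, set $u_1=a_1$, $u_2=a_2$, $L(a_1)=\{1,2,3\}$, $L(a_2)=\{4,5,6\}$, and $L(b_j)=\{1,2,3\}$ for each~$j$; then $L(u_1)\cap L(u_2)=\emptyset$ yet every colour of $L(u_1)$ lies in the list of every neighbour of $u_1$, so your reduction to Case~A does not fire.

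The standard repair reorganises the case split. If $L(u_1)\cap L(u_2)=\emptyset$ then in particular not all lists in $G$ coincide, so by connectivity there exist \emph{adjacent} vertices $x,y$ with $L(x)\neq L(y)$; colouring $x$ with some $c\in L(x)\setminus L(y)$ gives $y$ strict surplus in the connected graph $G-x$, and Case~A applies. Conversely, if all lists are equal then $L(u_1)\cap L(u_2)=L(u_1)\neq\emptyset$ and your common-colour trick already works. Your caveat about the cut-vertex reduction is also well placed: stripping a leaf block $B$ and colouring $G-(V(B)\setminus\{v^*\})$ first via the surplus at $v^*$ is the right shape, but extending the colour of $v^*$ into a clique or odd-cycle leaf block can fail for a bad choice of colour at $v^*$, and your sketch does not yet explain how that choice is controlled.
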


\section{Reducible configurations}

In view of Lemma~\ref{lemma-redu}, we aim to prove that every planar graph of girth at least $6$ contains
a $(3,3)$-reducible induced subgraph (with bounded number of vertices).
Note that in the case of $(d,3)$-reducibility, the $d$-independent set considered in (FORB) has size at most $1$.
Hence, (FORB) is implied by (FIX) and the additional assumption that $\deg_H(v)+\delta_{G,3}(v)-1\ge 1$,
i.e., that $\deg_H(v)\ge\deg_G(v)-1$, for every $v\in V(H)$.  In particular, the value of $d$ is not
important, and for brevity we will say \emph{$3$-reducible} instead of $(d,3)$-reducible.

\begin{observation}\label{obs-deg1}
In any graph $G$, a vertex of degree at most $1$ forms a $3$-reducible subgraph.
\end{observation}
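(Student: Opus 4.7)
The plan is to take $H$ to be the induced subgraph consisting of just the single vertex $v$ of degree at most $1$ in $G$. Then $\deg_H(v)=0$ and $\delta_{G,3}(v)=3-\deg_G(v)\ge 2$, so the only obligations imposed by (FIX) and (FORB) are about lists of size $1$ or $2$ at a single isolated vertex.

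For (FIX), note that the only vertex of $H$ is $v$ itself, and $\bigl((\deg_H+\delta_{G,3})\downarrow v\bigr)(v)=1$. Thus any $((\deg_H+\delta_{G,3})\downarrow v)$-assignment $L$ provides a nonempty list at $v$, and assigning $v$ any color from $L(v)$ yields an $L$-coloring of $H$ (there are no edges to violate).

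For (FORB), I would invoke the simplification highlighted immediately before the observation: in the case of $(d,3)$-reducibility, since every $d$-independent set considered has size at most $1$, (FORB) follows from (FIX) together with the inequality $\deg_H(v)\ge \deg_G(v)-1$ for every $v\in V(H)$. In our setting this reads $0\ge \deg_G(v)-1$, which holds precisely because $\deg_G(v)\le 1$ by assumption. Hence (FORB) comes for free.

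There is no real obstacle here: the statement is essentially a sanity check that the definition of $3$-reducibility is consistent with the intuition, since an isolated or leaf vertex $v$ of $G$ inherits at least two colors no matter how its (at most one) external neighbor is colored, and any single forbidden color or prescribed color still leaves a valid choice. The only point requiring a moment of care is remembering that $f\downarrow v$ sets the value at $v$ to $1$ (not to $\deg_H(v)+\delta_{G,3}(v)-1$), so (FIX) is weaker than one might first guess.
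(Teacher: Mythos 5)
Your proof is correct and is exactly the intended (and essentially only) argument: take $H=\{v\}$, check (FIX) directly since the list at $v$ is nonempty, and get (FORB) from the reduction stated just before the observation using $\deg_G(v)\le 1$. The paper omits the proof entirely as trivial, so there is nothing to compare beyond noting that your verification fills it in correctly.
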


Let $Z$ be a subgraph of a graph $G$. A \emph{$Z$-nice path} in $G$ is a path whose internal vertices have degree exactly three in $G$ and
do not belong to $Z$.
A vertex $u$ is \emph{$Z$-nicely reachable} from a vertex $v$ if there exists a $Z$-nice path from $v$ to $u$;
let $d_Z(u,v)$ denote the length of shortest such path.  If $u$ is not $Z$-nicely reachable from $v$, let us
define $d_Z(u,v)=\infty$.

\begin{lemma}\label{lemma-dist2}
Let $G$ be a graph, let $Z$ be a subgraph of $G$, and let $u,v\in V(G)\setminus V(Z)$ be distinct vertices of degree two.
If $u$ is $Z$-nicely reachable from $v$, then $G$ contains a $3$-reducible induced subgraph disjoint from $Z$ with at most $d_Z(u,v)+1$ vertices.
\end{lemma}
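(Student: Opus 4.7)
The plan is to take $H = G[V(P)]$ where $P = v_0 v_1 \ldots v_k$ is a shortest $Z$-nice path from $v$ to $u$, so $k = d_Z(u,v)$ and $|V(H)| = k+1$ as required. Disjointness from $Z$ is immediate: the endpoints $v_0 = v$ and $v_k = u$ lie outside $V(Z)$ by hypothesis, and the internal vertices of $P$ lie outside $V(Z)$ by $Z$-niceness. It remains to verify that $H$ satisfies (FIX) and (FORB).

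The first substantive task is to show $H = P$, i.e., the shortest $Z$-nice path has no chord. Suppose for contradiction there is a chord $v_i v_j$ with $j \geq i+2$. Then $v_0 \ldots v_i v_j \ldots v_k$ is strictly shorter than $P$; its internal vertices are a subset of those of $P$, possibly augmented by $v_i$ or $v_j$. In each subcase one checks that the new internal vertices are either internal vertices of $P$ (hence of degree $3$ in $G$ and outside $V(Z)$) or remain the original endpoints $u, v$, so the shortened path is still $Z$-nice, contradicting minimality.

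Once $H = P$, the degrees read off immediately: $\deg_H(v_0) = \deg_H(v_k) = 1$, and $\deg_H(v_i) = 2$ for $1 \leq i \leq k-1$. Combined with $\delta_{G,3}(v_0) = \delta_{G,3}(v_k) = 1$ (since $u,v$ have degree $2$) and $\delta_{G,3}(v_i) = 0$ for internal $v_i$, this gives $\deg_H(x) + \delta_{G,3}(x) = 2$ at every vertex. For (FIX), a $((\deg_H + \delta_{G,3}) \downarrow v')$-assignment provides lists of size at least $2$ everywhere and at least $1$ at the prescribed vertex $v'$. Since $H$ is a path, coloring $v'$ arbitrarily and extending greedily in both directions succeeds: each new vertex has at most one already-colored neighbor and a list of size at least $2$.

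For (FORB), by the observation made in this section it suffices to verify $\deg_H(x) \geq \deg_G(x) - 1$ pointwise, and this holds with equality at every vertex of $H$. The only nontrivial step is the chord-freeness argument: the boundary situations $i = 0$, $j = k$, and $k = 1$ must be handled carefully to ensure the shortened path is still $Z$-nice rather than, say, having a new internal vertex of degree $2$ or one lying in $Z$. Everything else is routine degree bookkeeping and the greedy coloring of a path.
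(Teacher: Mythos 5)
Your proof is correct and follows essentially the same route as the paper: take a shortest $Z$-nice path $P$ from $v$ to $u$, observe it is induced and that $\deg_H(x)=\deg_G(x)-1$ everywhere (so (FORB) reduces to (FIX) via the remark at the start of the section), and verify (FIX) by greedy coloring of the path outward from the prescribed vertex. The only difference is that you spell out the chord-freeness argument that the paper dismisses with ``clearly,'' which is a reasonable addition.
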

\begin{proof}
Let $H$ be a shortest $Z$-nice path from $u$ to $v$ in $G$.  Clearly, $|V(H)|=d_Z(u,v)+1$ and $H$ is an induced path in $G$ disjoint from $Z$.
We claim that $H$ is $3$-reducible.  Note that $\deg_H(x)=\deg_G(x)-1$ for every $x\in V(H)$, and thus it suffices to show
that $H$ satisfies (FIX).  For any vertex $x$ of $H$, consider a $(\deg_H+\delta_{G,3})\downarrow x$-assignment $L$.
Each vertex of $H$ other than $x$ has list of size at least two, and thus we can $L$-color the path $H$ greedily starting from $x$.
Hence, $H$ is $L$-colorable; and thus (FIX) holds.
\end{proof}

\begin{lemma}\label{lemma-distk}
Let $G$ be a graph, let $Z$ be a subgraph of $G$, let $v\in V(G)\setminus V(Z)$ be a vertex of degree $k\ge 4$,
and let $u_1, \ldots, u_{k-1}\in V(G)\setminus V(Z)$ be distinct vertices of degree two.
If all vertices $u_1$, \ldots, $u_{k-1}$ are $Z$-nicely reachable from $v$, then $G$ contains a $3$-reducible induced subgraph
disjoint from $Z$ with at most $1+\sum_{i=1}^{k-1}d_Z(u_i,v)$ vertices.
\end{lemma}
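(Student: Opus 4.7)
\medskip
\noindent\textbf{Proof plan for Lemma~\ref{lemma-distk}.}

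My plan is to take shortest $Z$-nice paths $P_1,\ldots,P_{k-1}$ from $v$ to $u_1,\ldots,u_{k-1}$ and study the induced subgraph $H:=G[V(P_1)\cup\cdots\cup V(P_{k-1})]$, splitting on whether these paths interact. In the ``interacting'' case I reduce directly to Lemma~\ref{lemma-dist2}, and in the ``non-interacting'' case I show that $H$ itself is a spider on exactly $1+\sum_\ell d_Z(u_\ell,v)$ vertices and is $3$-reducible.

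For the first step, suppose there exist $i\neq j$ and vertices $w_i\in V(P_i)\setminus\{v\}$, $w_j\in V(P_j)\setminus\{v\}$ with either $w_i=w_j$ or $w_iw_j\in E(G)$. Concatenating the reversal of $P_i$ from $u_i$ to $w_i$, the (possibly trivial) hop at $w_iw_j$, and the portion of $P_j$ from $w_j$ to $u_j$ produces a $Z$-nice walk from $u_i$ to $u_j$ of length at most $d_Z(u_i,v)+d_Z(u_j,v)-1$ (since $w_i,w_j\neq v$, both end-subpaths are strictly shorter than the full $P_i,P_j$). Passing to a simple subpath yields $d_Z(u_i,u_j)\leq d_Z(u_i,v)+d_Z(u_j,v)-1$, so Lemma~\ref{lemma-dist2} delivers a $3$-reducible induced subgraph disjoint from $Z$ with at most $d_Z(u_i,v)+d_Z(u_j,v)\leq 1+\sum_\ell d_Z(u_\ell,v)$ vertices.

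In the opposite case the $P_i$'s are pairwise disjoint away from $v$ and no edge of $G$ joins two of them outside $v$; in particular distinct $P_i$'s have distinct first edges at $v$. I also use shortness of each $P_i$ separately to rule out any chord of $P_i$ (including an edge from $v$ to an interior vertex of $P_i$), since such a chord would immediately shortcut $P_i$ to a strictly shorter $Z$-nice path from $v$ to $u_i$. Together these facts force $H$ to be precisely the spider with center $v$ and legs $P_1,\ldots,P_{k-1}$, so $\deg_H(v)=k-1$, every internal leg-vertex has $\deg_H=2$, and $\deg_H(u_i)=1$; hence $\deg_H(x)=\deg_G(x)-1$ for every $x\in V(H)$, and $|V(H)|=1+\sum_\ell d_Z(u_\ell,v)$.

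Finally I verify $3$-reducibility of this spider. By the remark preceding Observation~\ref{obs-deg1} it suffices to check (FIX). In any $((\deg_H+\delta_{G,3})\downarrow v')$-assignment every vertex other than $v'$ has a list of size at least two, so using that $H$ is a tree I color $v'$ with its single available color and extend greedily outward from $v'$ along each branch --- each new vertex has only one already-colored neighbor and therefore at least one free color. The main obstacle in the whole argument is really the structural set-up of the previous paragraph (killing chords, shared vertices, and cross-edges so that $H$ is genuinely a spider with $\deg_H(v)=k-1$); once that is in place the (FIX) check is essentially automatic.
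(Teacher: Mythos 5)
Your proof is correct and follows essentially the same route as the paper: form the union $H$ of shortest $Z$-nice paths $P_1,\dots,P_{k-1}$, fall back on Lemma~\ref{lemma-dist2} whenever two paths meet or are joined outside $v$, and otherwise verify (FIX) for the resulting spider by greedy coloring of a tree, with (FORB) following from $\deg_H=\deg_G-1$. The only difference is that you spell out the details behind the paper's ``we can assume the paths intersect only in $v$ and $H$ is induced'' step (the length bound $d_Z(u_i,u_j)\le d_Z(u_i,v)+d_Z(u_j,v)-1$ and the exclusion of chords by minimality), which the paper leaves implicit.
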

\begin{proof}
For $i=1,\ldots,k-1$, let $P_i$ be a shortest $Z$-nice path from $u_i$ to $v$ in $G$.
Let $H=P_1\cup\ldots\cup P_{k-1}$.  We can assume that every two of the paths intersect only in $v$
and that $H$ is an induced subgraph of $G$, as otherwise the claim follows
by Lemma~\ref{lemma-dist2}.  Note that $|V(H)|=1+\sum_{i=1}^{k-1}d_Z(u_i,v)$ and $V(H)\cap V(Z)=\emptyset$.

We claim that $H$ is $3$-reducible.  Note that $\deg_H(x)=\deg_G(x)-1$ for every $x\in V(H)$, and thus it suffices to show
that $H$ satisfies (FIX).  For any vertex $x$ of $H$, consider a $(\deg_H+\delta_{G,3})\downarrow x$-assignment $L$.
Each vertex of $H$ other than $x$ has list of size at least two, and thus we can $L$-color the tree $H$ greedily starting from $x$.
Hence, $H$ is $L$-colorable; and thus (FIX) holds.
\end{proof}

\begin{lemma}\label{lemma-2faces}
Let $G$ be a $2$-connected plane graph of girth at least six, and let $f_1$ and $f_2$ be distinct $6$-faces of $G$ sharing
at least one edge such that all vertices incident with $f_1$ or $f_2$ have degree at most three.
If $f_2$ is incident with a vertex of degree two, then $G$ contains a $3$-reducible induced subgraph
with at most $10$ vertices, all incident with $f_1$ or $f_2$.
\end{lemma}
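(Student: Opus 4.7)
The plan is to build a $3$-reducible induced subgraph inside $V^{\ast}:=V(C_1)\cup V(C_2)$, where $C_1$ and $C_2$ denote the facial cycles of $f_1$ and $f_2$. First I would observe that $C_1$ and $C_2$ share a common subpath $P$ of some length $k\ge 1$, and that the outer boundary of $f_1\cup f_2$ is a cycle of length $12-2k$; the girth hypothesis forces $k\le 3$, so $|V^{\ast}|=11-k\le 10$. The key structural observation is that every internal vertex of $P$ has degree exactly $2$ in $G$: both of its incident edges lie on the boundaries of $f_1$ and $f_2$, leaving no room for a third edge without subdividing one of these faces. Let $S$ be the set of vertices of $V^{\ast}$ of degree $2$ in $G$, so $u\in S$, and put $H^{\ast}:=G[V^{\ast}]$.

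Next I would split on whether $|S|\ge 2$ or $|S|=1$. When $|S|\ge 2$, I choose $u'\in S\setminus\{u\}$ minimizing $d_{H^{\ast}}(u,u')$ and take $Q$ to be a shortest $u$-to-$u'$ path in $H^{\ast}$. Since shortest paths are induced and $V(Q)\subseteq V^{\ast}$, the path $Q$ is induced in $G$, and the minimality of $u'$ forces every internal vertex of $Q$ to avoid $S$ and therefore to have degree exactly $3$ in $G$. Consequently $\deg_Q(v)=\deg_G(v)-1$ at every $v\in V(Q)$, so (FORB) follows from (FIX), and (FIX) follows by precisely the greedy-extension argument used in the proof of Lemma~\ref{lemma-dist2}. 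The resulting $3$-reducible induced subgraph sits on at most $|V^{\ast}|\le 10$ vertices, all incident with $f_1$ or $f_2$.

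The remaining case $|S|=1$ is where the real work lies. The key observation forces $P$ to have at most one internal vertex, so $k\in\{1,2\}$; moreover either $u$ is the internal vertex of $P$ (for $k=2$) or $u\in V(C_2)\setminus V(P)$ (for $k=1$). I propose to show that $H^{\ast}$ itself is $3$-reducible. Because every vertex of $V^{\ast}$ has at most one neighbor outside $V^{\ast}$, we have $\deg_{H^{\ast}}\ge\deg_G-1$ throughout, so (FORB) follows from (FIX). To verify (FIX), I fix $x\in V^{\ast}$, color it from its one-element list, and apply Lemma~\ref{lemma-gallai} to $H^{\ast}-x$ with the induced list assignment $L'$; this graph is connected because $H^{\ast}$ is $2$-connected, and the bounds $|L'(v)|\ge \deg_{H^{\ast}-x}(v)$ are routine to check. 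If $x\ne u$, the vertex $u$ itself supplies the strict inequality $|L'(u)|>\deg_{H^{\ast}-x}(u)$ (arising from $\delta_{G,3}(u)=1$), so Lemma~\ref{lemma-gallai} applies immediately.

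The hard step will be the subcase $x=u$, where no such strict inequality is automatic. My plan is to exhibit a $2$-connected block of $H^{\ast}-u$ that is neither a complete graph nor an odd cycle, so that Lemma~\ref{lemma-gallai} still applies: when $k=2$ this block is the $8$-cycle obtained from $C_1\cup C_2$ by deleting the internal vertex of $P$, and when $k=1$ it is either $C_1$ itself (a $6$-cycle) or a theta-graph of girth $6$ arising when a single chord between $V(C_1)\setminus V(P)$ and $V(C_2)\setminus V(P)$ is present. The main obstacle will be carefully enumerating what chords the girth-$6$ condition still allows when $k=1$ — a short computation shows that at most one such chord can appear, and in every case the corresponding block remains neither complete nor an odd cycle, so Lemma~\ref{lemma-gallai} yields the required coloring of $H^{\ast}-u$, completing the proof.
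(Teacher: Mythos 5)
Your proof is correct and follows essentially the same route as the paper: both take the induced subgraph $H$ on the vertices incident with $f_1$ or $f_2$, reduce (FORB) to (FIX) via $\deg_H\ge\deg_G-1$, and verify (FIX) by deleting $x$ and applying Lemma~\ref{lemma-gallai}, using the surplus at a degree-two vertex when $x\neq u$ and an even cycle surviving in $H-x$ when $x$ itself has degree two. The only difference is your extra case $|S|\ge 2$, where you substitute a shorter path as in Lemma~\ref{lemma-dist2}; the paper handles all cases uniformly with the full ten-vertex subgraph, so that split --- and the chord enumeration for $k=1$ --- is harmless but unnecessary (any block containing an even cycle in a girth-six graph is automatically neither complete nor an odd cycle).
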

\begin{proof}
Let $C_1$ and $C_2$ be the cycles bounding $f_1$ and $f_2$ in $G$, respectively.
Since $G$ has girth at least six, $C_1\cap C_2$ is a path.
Let $H$ be the subgraph of $G$ induced by vertices incident with $f_1$ or $f_2$; we have $|V(H)|\le 10$.

We claim that $H$ is $3$-reducible.  Note that $\deg_H(x)\ge\deg_G(x)-1$ for every $x\in V(H)$, and thus it suffices to show
that $H$ satisfies (FIX).  For any vertex $x$ of $H$, consider a $(\deg_H+\delta_{G,3})\downarrow x$-assignment $L$.
Let $c$ be a color in $L(x)$ and let $L'$ be the list assignment for $H-x$ obtained from $L$ by removing $c$ from the lists of neighbors of $x$.
Note $|L'(v)|\ge \deg_{H-x}(v)$ for all $v\in V(H-x)$, and $|L'(v)|>\deg_{H-x}(v)$ if $\deg_G(v)=2$.
Observe that $H-x$ is connected, and that if $\deg_G(x)=2$, then $H-x$ contains an even cycle.
Hence $H-x$ is $L'$-colorable by Lemma~\ref{lemma-gallai}, and thus $H$ is $L$-colorable.
We conclude that (FIX) holds.
\end{proof}

\begin{lemma}\label{lemma-3faces}
Let $G$ be a $2$-connected plane graph of girth at least six, and let $f_1$, $f_2$, and $f_3$ be distinct $6$-faces of $G$,
all incident with a common vertex.  If all vertices incident with these faces have degree at most three,
then $G$ contains a $3$-reducible induced subgraph with at most $13$ vertices, all incident with $f_1$, $f_2$, or $f_3$.
\end{lemma}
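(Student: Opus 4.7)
The plan is to mimic the strategy of Lemma~\ref{lemma-2faces}. Let $H$ be the induced subgraph of $G$ on the vertices incident with $f_1\cup f_2\cup f_3$; I will show $H$ is $3$-reducible. Since $v$ is incident with three distinct faces and every vertex of those faces has degree at most three, we must have $\deg_G(v)=3$; write $u_1,u_2,u_3$ for its neighbours, labelled so that $vu_k$ is the edge on the shared boundary of the two of $f_1,f_2,f_3$ meeting at $v$ along it. The girth-$6$ assumption forces any two of the three $6$-faces to share a path of length $\ell_{ij}\in\{1,2,3\}$, with interior vertices of degree $2$ in $G$. A short inclusion-exclusion check then gives $|V(H)|\le 13$, attained precisely when each pair of faces shares only the edge $vu_k$. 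Moreover every $u\in V(H)$ has both of its face-boundary edges on some $f_i$ in $H$, so $\deg_H(u)\ge 2\ge \deg_G(u)-1$; by the remark preceding Observation~\ref{obs-deg1}, this reduces the verification of $3$-reducibility to checking (FIX).

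For (FIX), I fix $x\in V(H)$ and any $(\deg_H+\delta_{G,3})\downarrow x$-assignment $L$, pick an arbitrary $c\in L(x)$, and form $L'$ on $H-x$ by deleting $c$ from the lists of the neighbours of $x$. Then $|L'(u)|\ge \deg_{H-x}(u)$ for all $u\in V(H-x)$, with strict inequality whenever $\deg_G(u)=2$, and $H-x$ is easily seen to be connected. It then suffices to apply Lemma~\ref{lemma-gallai} to $H-x$ by exhibiting either a vertex $u$ with $|L'(u)|>\deg_{H-x}(u)$ or a $2$-connected component of $H-x$ that is neither complete nor an odd cycle.

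I analyse by the number of faces of $\{f_1,f_2,f_3\}$ containing $x$. If $x$ lies on only one face, the other two face boundaries remain intact as $6$-cycles and share an edge at $v$, so their union is a $2$-connected subgraph of $H-x$ that is neither a cycle nor complete. If $x$ lies on exactly two faces (so $x\in\{u_1,u_2,u_3\}$ or $x$ is an interior vertex of some shared path of length $\ge 2$), then the third face boundary is an intact even $6$-cycle and is a $2$-connected component of the required kind. If $x=v$, then in the generic case $H-v$ consists of the outer boundary of the plane region $f_1\cup f_2\cup f_3$ together with pendant paths attached at the interior vertices of the shared paths; this outer boundary is a single cycle of length $18-2(\ell_{12}+\ell_{13}+\ell_{23})$, which is always even and by girth at least $6$, and is the required $2$-connected component. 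In the degenerate case where $f_1\cup f_2\cup f_3$ covers the plane (so $H-v$ is a tree), every degree-$2$ vertex of $G$ in $H-v$ furnishes strict list excess, and Lemma~\ref{lemma-gallai} still applies. In every case Lemma~\ref{lemma-gallai} yields an $L'$-coloring of $H-x$, and extending by $\varphi(x)=c$ produces the desired $L$-coloring of $H$.

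The main obstacle I expect is the case $x=v$: one must identify the outer boundary correctly (and recognise when, exceptionally, it vanishes), verify that its length is both even and at least $6$, and check that it is a genuine $2$-connected component of $H-v$ rather than being merged with a pendant path into a larger block that might accidentally fail the hypothesis of Lemma~\ref{lemma-gallai}. In subcases where some pair of faces shares a path of length $\ge 2$, the slack $|L'(u)|>\deg_{H-x}(u)$ at the surviving degree-$2$ vertices of $G$ provides a convenient shortcut that makes the application of Lemma~\ref{lemma-gallai} immediate.
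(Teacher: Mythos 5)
Your proof is correct and runs on the same engine as the paper's: fix a colour $c$ at $x$, pass to $H-x$ with the reduced lists, and invoke Lemma~\ref{lemma-gallai} by exhibiting either list slack at a degree-two vertex or a block containing an even cycle. The one structural difference is that the paper first invokes Lemma~\ref{lemma-2faces}: if any vertex incident with one of the three faces had degree two in $G$, then that face together with one of the other two (they pairwise share an edge at the common vertex $v$, since $\deg_G(v)=3$) would already yield a $3$-reducible subgraph on at most $10$ vertices, so one may assume every vertex of $H$ has degree exactly three. That assumption forces each pair of faces to share exactly one edge, collapsing the configuration to the single clean $13$-vertex shape; then $H-x$ is obviously connected and contains an even cycle (an intact hexagon when $x\neq v$, the $12$-cycle outer boundary when $x=v$), and essentially all of your case analysis --- the shared paths of length $2$ or $3$, the inclusion--exclusion over the $\ell_{ij}$, the degenerate ``covers the plane'' theta-graph case --- evaporates. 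Your in-situ treatment of the degree-two vertices via the strict-inequality branch of Lemma~\ref{lemma-gallai} is also valid (those vertices are exactly the interior vertices of shared boundary paths, and they do give $|L'(u)|>\deg_{H-x}(u)$), just costlier: the obstacle you flag in the $x=v$ case, identifying the outer boundary and checking it survives as an admissible block, is precisely what the preliminary reduction buys you out of.
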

\begin{proof}
Let $H$ be the subgraph of $G$ induced by vertices incident with $f_1$, $f_2$, or $f_3$; we have $|V(H)|\le 13$.
Since $G$ has girth at least six, the boundaries of any two of the faces intersect in a path.
By Lemma~\ref{lemma-2faces}, we can assume that all vertices of $H$ have degree exactly three in $G$;
consequently, any two of the faces $f_1$, $f_2$, and $f_3$ share exactly one edge.

We claim that $H$ is $3$-reducible.  Note that $\deg_H(x)\ge\deg_G(x)-1$ for every $x\in V(H)$, and thus it suffices to show
that $H$ satisfies (FIX).  For any vertex $x$ of $H$, consider a $(\deg_H+\delta_{G,3})\downarrow x$-assignment $L$.
Let $c$ be a color in $L(x)$ and let $L'$ be the list assignment for $H-x$ obtained from $L$ by removing $c$ from the lists of neighbors of $x$.
Note $|L'(v)|\ge \deg_{H-x}(v)$ for all $v\in V(H-x)$.  Observe that $H-x$ is connected and contains an even cycle.
Hence $H-x$ is $L'$-colorable by Lemma~\ref{lemma-gallai}, and thus $H$ is $L$-colorable.
We conclude that (FIX) holds.
\end{proof}

\section{Discharging}

Let $G$ be a $2$-connected plane graph of girth at least six such that $|V(G)|\ge 3$ and every $6$-cycle bounds a face,
and let $Z$ be either a vertex or a $6$-cycle in $G$.
Let us assign the initial charge $\ch_0(v)=2\deg(v) - 6$ to each vertex $v\in V(G)\setminus V(Z)$ and
$\ch_0(f)=|f|-6$ to each face $f$ of $G$.  If $Z$ consists of a single vertex $z$, then let $\ch_0(z)=2\deg(z)$,
otherwise let $\ch_0(z)=2\deg(z)-4$ for each $z\in V(Z)$.  Let $\delta=6$ if $|V(Z)|=1$ and $\delta=12$ if $|V(Z)|=6$.
By Euler's formula, we have
\begin{align*}
\sum_{v\in V(G)} \ch_0(v)+\sum_{f\in F(G)}\ch_0(f)&=(4|E(G)|-6|V(G)|)+\delta+(2|E(G)|-6|F(G)|)\\
&=6(|E(G)|-|V(G)|-|F(G)|)+\delta=\delta-12.
\end{align*}

For a face $f$ of $G$ and a vertex $u$, let us define $d_Z(f,u)$ as $0$ if $u$ is incident with $f$,
and as the minimum of $d_Z(v,u)$ over all vertices $v\not\in V(Z)$ of degree three incident with $f$ otherwise.
Let us now redistribute the charge according to the following rules:
\begin{itemize}
\item[(T0)] Each vertex $z\in V(Z)$ sends 2 units of charge to each vertex $u\not\in V(Z)$ of degree two such that $d_Z(u,z)\le 7$.
\item[(T1)] Each vertex $v\not\in V(Z)$ of degree at least $4$ sends 1 unit of charge to each vertex $u\not\in V(Z)$ of degree two such that $d_Z(u,v)\le 7$.
\item[(T2)] Each face $f$ of length at least $7$ sends 1 unit of charge to each vertex $u\not\in V(Z)$ of degree two such that
$u$ receives at most one unit of charge by (T0) and (T1) and $d_Z(f,u)\le 3$.
\end{itemize}
Let $\ch_1$ denote the final charge after performing the redistribution according to these rules.

Whenever one of the rules (T0), (T1), or (T2) applies, this is because of some shortest $Z$-nice path $P$ ending in a vertex $u\not\in V(Z)$ of degree two
and starting in a vertex of $Z$, or a vertex of degree at least four, or a vertex incident with a $(\ge\!7)$-face.
This path is not necessarily unique, but for each application of a rule we fix one such path $P$ arbitrarily; and if $P$ has length at least one
and starts with an edge $e$, we say that the charge to $u$ is being sent \emph{through} the edge $e$.

\begin{lemma}\label{lemma-finsix}
Let $G$ be a $2$-connected plane graph of girth at least six such that $|V(G)|>2$ and every $6$-cycle bounds a face,
and let $Z$ be either a vertex or a $6$-cycle in $G$.
If $G$ does not contain a $3$-reducible induced subgraph with at most $29$ vertices disjoint from $Z$, then
$\ch_1(v)\ge 0$ for all $v\in V(G)$ and $\ch_1(f)\ge 0$ for all $f\in F(G)$.  If additionally $Z$ is a $6$-cycle, then
some vertex or face has positive final charge.
\end{lemma}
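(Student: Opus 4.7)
The plan is to verify element-by-element that every vertex and face of $G$ ends with $\ch_1\ge 0$ under the redistribution (T0)--(T2), and in the $6$-cycle case to identify at least one strictly positive element. The arguments for most element types are short applications of Lemmas~\ref{lemma-dist2} and~\ref{lemma-distk}; the substantive work lies in the degree-$2$ vertex and the $(\ge 7)$-face.

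First I would handle the easy cases. A degree-$3$ vertex outside $V(Z)$ and a $6$-face each have initial charge $0$ and never participate in any rule, so they end at $0$. For a non-$Z$ vertex $v$ of degree $k\ge 4$, each neighbor of $v$ starts at most one thread ending at a degree-$2$ non-$Z$ vertex, so at most $k$ vertices receive (T1) charge from $v$; for $k\ge 6$ this gives $\ch_1(v)\ge 2k-6-k\ge 0$ immediately, and for $k\in\{4,5\}$ Lemma~\ref{lemma-distk} rules out $k-1$ such recipients at $Z$-nice distance $\le 7$ (they would yield a $3$-reducible induced subgraph disjoint from $Z$ with at most $1+7(k-1)\le 29$ vertices, contradicting the hypothesis), so $v$ sends at most $k-2$ units and $\ch_1(v)\ge k-4\ge 0$. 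For $z\in V(Z)$, its $\deg(z)$ neighbors (resp.\ $\deg(z)-2$ neighbors outside the $6$-cycle) give at most that many recipients of (T0) charge, so the budget $2\deg(z)$ (resp.\ $2\deg(z)-4$) exactly covers the $2$ units per recipient.

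The chief obstacle is a degree-$2$ vertex $u\notin V(Z)$, which starts at $-2$ and must receive at least $2$. I would follow the two shortest $Z$-nice paths out of $u$ to their endpoints $w_1,w_2$, which are in $V(Z)$ or have degree $\ne 3$. Lemma~\ref{lemma-dist2} forbids any $w_i$ from being a degree-$2$ non-$Z$ vertex at $Z$-nice distance $\le 28$, so any short thread (length $\le 7$) contributes $2$ units via (T0) (if $w_i\in V(Z)$) or $1$ unit via (T1) (if $\deg(w_i)\ge 4$); when both threads are short, $u$ is done. Otherwise at least one thread is long, and I would look at the two faces $f_L,f_R$ incident with $u$: Lemma~\ref{lemma-2faces} rules out the configuration in which both faces are $6$-faces, their boundaries avoid $V(Z)$, and all their boundary vertices have degree $\le 3$. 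Consequently at least one of $f_L,f_R$ is a $(\ge 7)$-face (triggering (T2) at $u$) or contains a $V(Z)$-vertex or a degree-$\ge 4$ vertex close enough to $u$ along its boundary to trigger (T0) or (T1) along a shortcut $Z$-nice path. Enumerating the remaining sub-cases, split on which threads are short, which incident faces are $(\ge 7)$, and where the nearest $V(Z)$ or high-degree vertex sits, then shows that the combined contributions to $u$ reach at least $2$.

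For a $(\ge 7)$-face $f$ of length $\ell$, I would bound the number of degree-$2$ vertices receiving (T2) from $f$ by $\ell-6$. Each such recipient either lies on $\partial f$ or is reachable within distance $3$ along a $Z$-nice path from some degree-$3$ non-$Z$ boundary vertex of $f$; Lemma~\ref{lemma-3faces} forbids three $6$-faces meeting at any boundary vertex of $f$, limiting the local density of off-$f$ paths, while the girth hypothesis prevents two such paths from distinct boundary vertices from terminating at a common recipient. An edge-by-edge accounting along $\partial f$ then yields the bound. Finally, for the $6$-cycle positivity, the total charge is $0$ and every element is non-negative, so I would identify one strict inequality: since $G$ is $2$-connected with $|V(G)|>|V(Z)|$, at least one of the above estimates has slack --- for instance, a vertex $z\in V(Z)$ with an outside thread of length $>7$ or ending at a non-degree-$2$ vertex, or a $(\ge 7)$-face with unused slack in its edge accounting --- supplying a strictly positive element.
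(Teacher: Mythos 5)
The overall shape of your argument matches the paper's: verify non-negativity element by element, with Lemmas~\ref{lemma-dist2} and~\ref{lemma-distk} disposing of the vertices of degree at least four and the vertices of $Z$ exactly as you describe. The gap is in the one place you identify as "the chief obstacle" and then do not actually resolve: a degree-two vertex $u\notin V(Z)$ with both threads long. Lemma~\ref{lemma-2faces} applied to the two faces at $u$ does give you \emph{one} unit of charge, but $u$ needs \emph{two}, and "enumerating the remaining sub-cases" is not a proof that the second unit arrives. In the paper this step is the core of the lemma: assuming $u$ receives charge from exactly one source, one shows that a whole cascade of faces $f_3,f_4,f'_4,f_5,f'_5$ at distance two and three from $u$ must all be $6$-faces with all vertices of degree three, and the contradiction is finally delivered by Lemma~\ref{lemma-3faces} (three $6$-faces sharing a vertex), using crucially the hypothesis that every $6$-cycle bounds a face to separate $v_5$ from $f_5$ or $f'_5$. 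Your outline never invokes Lemma~\ref{lemma-3faces} for this purpose, and the obstruction is not visible from the two faces incident with $u$ alone, so the case analysis you propose around $u$ cannot close without this additional idea. (This is also why the configuration of Lemma~\ref{lemma-3faces} is in the paper at all.)

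Two smaller gaps. For a face $f$ with $|f|=\ell\ge 7$, the bound of $\ell-6$ recipients is a correct target, but your justification does not support it: Lemma~\ref{lemma-3faces} is about three $6$-faces and says nothing about a $(\ge 7)$-face, and girth alone does not prevent two nearby boundary vertices from feeding recipients. The working mechanism is to assign to each recipient an edge-disjoint boundary subpath of length four (edge-disjointness enforced by Lemma~\ref{lemma-dist2}, since otherwise two degree-two recipients would be nicely reachable from one another), giving at most $\lfloor \ell/4\rfloor\le \ell-6$ recipients; for $\ell=7$ in particular you must genuinely rule out a second recipient. Finally, the strict positivity when $Z$ is a $6$-cycle is asserted ("at least one estimate has slack") rather than proved; the paper pins down a concrete witness by examining a face sharing an edge with $Z$ and showing that either that face, an endvertex $z_i$ of $Z\cap K$, or a recipient $u_i$ of charge through both edges $e_1,e_2$ ends with strictly positive charge.
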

\begin{proof}
Let $f$ be a face of $G$.  Since $G$ is $2$-connected, $|V(G)|>2$, and $G$ has girth at least six,
we have $|f|\ge 6$.  If $|f|=6$, then $\ch_1(f)=\ch_0(f)=0$.  Hence, suppose that $|f|\ge 7$.
Let $W$ denote the cycle bounding $f$, and consider a vertex $x\in W$, contained in a subpath
$v_1v_2v_3xv_4v_5v_6$ of $W$.  If $v_3$ and $v_4$ do not belong to $Z$ and have degree three, then let $W_x=v_2v_3xv_4v_5$;
if $v_3$ belongs to $Z$ or has degree other than three, then $W_x=v_3xv_4v_5v_6$; otherwise, $v_4$ belongs to $Z$
or has degree other than three, and we let $W_x=v_1v_2v_3xv_4$.
For each vertex $u\not\in V(Z)$ of degree two to which $f$ sends charge according to (T2), let $u'=u$ if $u$ is incident with $f$ and
let $u'\not\in V(Z)$ be a vertex of degree three incident with $f$ such that $d_Z(u,u')\le 3$ otherwise.
Observe that all internal vertices of $W_{u'}$ other than $u'$ have degree three and no vertex of $W_{u'}$ belongs to $Z$,
as otherwise either $G$ contains a $3$-reducible induced subgraph with at most $6$ vertices disjoint from $Z$ by Lemma~\ref{lemma-dist2},
or $u$ receives $2$ units of charge by (T0) and (T1).
Furthermore, if $f$ sends charge to distinct vertices $u_1$ and $u_2$ by (T2), then the paths $W_{u'_1}$ and $W_{u'_2}$ are edge-disjoint,
as otherwise $G$ would contain a $3$-reducible induced subgraph with at most $12$ vertices disjoint from $Z$ by Lemma~\ref{lemma-dist2}.
We conclude that the amount of charge sent by $f$ is at most $\lfloor |f|/4\rfloor$,
and thus $\ch_1(f)\ge |f|-6-\lfloor |f|/4\rfloor=\lceil (3|f|-24)/4\rceil\ge 0$.

Furthermore, if $Z$ shares an edge with $f$, then the amount of charge sent by $f$ is at most $\lfloor (|f|-3)/4\rfloor$ by the
same argument, and thus $\ch_1(f)\ge \lceil (3|f|-21)/4\rceil$.  If $|f|\ge 8$, this implies $\ch_1(f)>0$.  If $|f|=7$,
then observe that $f$ does not send charge by (T2): if it sent charge to some vertex $u$, then the endvertices of $W_{u'}$ would
have distinct neighbors in $Z$, and thus $u$ would receive at least $2$ units of charge by (T0) and (T1), which is a contradiction.
We conclude that if a face $f$ of length at least $7$ shares an edge with $Z$, then $\ch_1(f)>0$.

Let $v$ be a vertex of $G$.  Since $G$ is $2$-connected and $|V(G)|>2$, we have $\deg(v)\ge 2$.
Let us first consider the case that $v\in V(Z)$ or $\deg(v)\ge 4$.
Let $m$ denote the number of vertices $u\not\in V(Z)$ of degree two such that $d_Z(u,v)\le 7$.
Note that $v$ does not send charge to two distinct vertices $u_1$ and $u_2$ through the same edge,
as otherwise we would have $d_Z(u_1,u_2)\le 12$ and by Lemma~\ref{lemma-dist2} $G$ would contain
a $3$-reducible induced subgraph disjoint from $Z$ with at most $13$ vertices.
Hence, we have $m\le \deg(v)$.  Furthermore, no charge is sent through the edges of $Z$, and
thus if $v\in Z$ and $Z$ is a $6$-cycle, then $m\le \deg(v)-2$.
Finally, if $v\not\in V(Z)$ and $\deg(v)\le 5$, then Lemma~\ref{lemma-distk} implies $m\le \deg(v)-2$,
as otherwise $G$ would contain a $3$-reducible induced subgraph disjoint from $Z$ with at most $29$ vertices.
Hence, if $Z$ consists just of the vertex $v$, then $\ch_1(v)=\ch_0(v)-2m=2\deg(v)-2m\ge 0$ by (T0);
if $Z$ is a $6$-cycle and $v\in V(Z)$, then $\ch_1(v)=\ch_0(v)-2m=2\deg(v)-4-2m\ge 0$ by (T0);
if $v\not\in V(Z)$ and $\deg(v)\ge 6$, then $\ch_1(v)=\ch_0(v)-m=2\deg(v)-6-m\ge 0$ by (T1);
and if $v\not\in V(Z)$ and $\deg(v)\le 5$, then $\ch_1(v)=2\deg(v)-6-m\ge \deg(v)-4\ge 0$ by (T1).

Furthermore, consider the case that $Z$ shares an edge with a $6$-face $f$, bounded by a $6$-cycle $K$.
Since $G$ has girth at least $6$, $Z\cap K$ is a path; let $z_1$ and $z_2$ be its endvertices,
and let $e_1$ and $e_2$ be the edges of $E(K)\setminus E(Z)$ incident with $z_1$ and $z_2$, respectively.
If charge is sent through $e_1$ and $e_2$ to vertices $u_1$ and $u_2$, respectively, then either $u_1=u_2$,
or $u_1\neq u_2$ and the path $K-V(Z)$ contains a vertex $w$ of degree at least $4$ by Lemma~\ref{lemma-dist2}.
In the former case, $\ch_1(u_1)\ge\ch_0(u_1)+4>0$.  In the latter case we can by symmetry assume that $w$
is at distance two from $z_1$, and thus $u_1$ also receives charge from $w$ by (T1) and
$\ch_1(u_1)\ge\ch_0(u_1)+3>0$.

We conclude that if $Z$ shares an edge with a $6$-face $f$,
then $G$ contains a vertex $u$ with $\ch_1(u)>0$ (with $u\in\{u_1,u_2\}$ if charge is sent through both $e_1$ and $e_2$,
and $u\in \{z_1,z_2\}$ otherwise).  Together with the preceding analysis of the case that a $(\ge\!7)$-face shares
an edge with $Z$, this implies that if $Z$ is a $6$-cycle, then some vertex or face has positive final charge.

Finally, let us consider the final charge of a vertex $v\not\in V(Z)$ of degree at most three.
If $\deg(v)=3$, then $\ch_1(v)=\ch_0(v)=0$; hence, suppose $\deg(v)=2$. 
If $v$ receives charge by (T0), then $\ch_1(v)\ge\ch_0(v)+2\ge 0$.  Hence, we can assume this is not the case, and thus $d_Z(v,z)>7$
for every $z\in V(Z)$.
Let $f_1$ and $f_2$ be the faces incident with $v$.  If $v$ receives charge neither from $f_1$ and $f_2$ by (T2) nor from the vertices
incident with $f_1$ or $f_2$ by (T1), then both $f_1$ and $f_2$ would have length six, and either $G$ would contain a $3$-reducible
induced subgraph with at most $4$ vertices disjoint from $Z$ by Lemma~\ref{lemma-dist2}, or all all other vertices incident with $f_1$
and $f_2$ would have degree exactly three and would not belong to $Z$.
But in the latter case, $G$ would contain a $3$-reducible induced subgraph with at most $10$ vertices disjoint from $Z$ by Lemma~\ref{lemma-2faces}.
This is a contradiction, and thus $v$ receives at least one unit of charge from $f_1$, $f_2$, or their incident vertices.

If $v$ receives charge from at least two vertices or faces, then $\ch_1(v)\ge\ch_0(v)+2\ge 0$.  Hence, we can assume that
$v$ receives charge either from exactly one vertex or from exactly one face.  In particular, we can by symmetry assume that $|f_2|=6$;
let $vv_1v_2v_3v_4v_5$ be the cycle bounding $f_2$.  By Lemma~\ref{lemma-dist2}, the neighbors of $v$ have degree
at least three.  Suppose now that $v$ does not receive charge from either of its
neighbors. We can by symmetry assume that $v$ receives charge from $f_1$ or a vertex $x$ incident with $f_1$.
In the latter case, we have $|f_1|=6$, and since $G$ has girth at least six, we conclude that
$x$ is not incident with $f_2$.  Consequently, Lemma~\ref{lemma-dist2} implies that all vertices incident with $f_2$ other than $v$
have degree exactly three.  Let $v'_2$ be the neighbor of $v_1$ distinct from $v$ and $v_2$.  By symmetry, we can assume that $v'_2\neq x$,
and thus $v'_2$ has degree exactly three.  Let $f_3$ be the face incident with the path $v_2v_1v'_2$.  Since $v$ does not receive
charge from $f_3$, we have $|f_3|=6$.  Since $G$ has girth at least six, $x$ is not incident with $f_3$, and since $v$ does not
receive charge from a vertex other than $x$, Lemma~\ref{lemma-dist2} implies that all vertices incident with $f_3$ have degree exactly three.
However, then $G$ would contain a $3$-reducible induced subgraph with at most $10$ vertices disjoint from $Z$ by Lemma~\ref{lemma-2faces}.

Therefore, we can assume that $v$ receives charge from its neighbor, by symmetry say $v_5$, and thus $|f_1|=6$, and by Lemma~\ref{lemma-dist2},
all vertices incident with $f_1$ or $f_2$ other than $v$ and $v_5$ have degree three.  Let $v'_2$ be the neighbor of $v_1$ distinct from $v$ and $v_2$,
let $f_3$ denote the face incident with the path $v_2v_1v'_2$, and let $f_4$ and $f'_4$ be the faces incident with $v_2$ and $v'_2$, respectively,
distinct from $f_1$, $f_2$, and $f_3$.  Since $v$ does not receive charge from $f_3$, $f_4$, or $f'_4$, we conclude these faces have length
six, and since $G$ has girth at least six, $v_5$ is not incident with $f_3$, $f_4$, or $f'_4$.  Consequently, all vertices incident with these
faces have degree three.  Let $v_2v_6$ and $v'_2v'_6$ be the edges $f_3$ shares with $f_4$ and $f'_4$,
and let $f_5$ and $f'_5$ be the faces incident with $v_6$ and $v'_6$, respectively, distinct from $f_3$, $f_4$, and $f'_4$.
Since $v$ does not receive charge from $f_5$ and $f'_5$, both faces have length six.  Let $v_7v_8$ be the edge shared by $f_5$ and $f'_5$.
Since $G$ has girth at least six and every $6$-cycle in $G$ bounds a face, we conclude $v_7\neq v_5\neq v_8$,
and consequently $v_5$ is incident with at most one of the faces $f_5$ and $f'_5$.  By symmetry, assume that $v_5$ is not incident with
$f_5$.  By Lemma~\ref{lemma-dist2} and the assumption that $v$ receives charge only from $v_5$, we conclude that all vertices incident with $f_5$
have degree three.  However, then $G$ contains a a $3$-reducible induced subgraph with at most $13$ vertices disjoint from $Z$ (induced by the vertices
incident with $f_3$, $f_4$, and $f_5$) by Lemma~\ref{lemma-3faces}.
This is a contradiction, showing that $\ch_1(v)\ge 0$.
\end{proof}

We are now ready to prove the main result of this note.

\begin{proof}[Proof of Theorem~\ref{thm:sixGirth}]
Let $G_0$ be a plane graph of girth at least six.  We apply Lemma~\ref{lemma-redu} (with $g=6$, $k=3$ and $b=29$) to show that $G_0$ is
weighted $\eps$-flexible (for fixed $\eps>0$ corresponding to the given values of $g$, $k$, and $b$)
with any assignment of lists of size $3$.  Since every subgraph of $G_0$ is planar and has girth at least six,
it suffices to prove that $G_0$ contains a $3$-reducible induced subgraph with at most $29$ vertices.
Without loss of generality, we can assume that $G_0$ is connected, and by Observation~\ref{obs-deg1}, we can assume
that all vertices of $G_0$ have degree at least two.

Let $G_1$ be a $2$-connected block of $G_0$; since $G_0$ has minimum degree at least two, we have $|V(G_1)|>2$.
If $G_1\neq G_0$, then let $Z_1$ be the subgraph of $G_1$ consisting of the vertex in that $G_1$ intersects the rest of the graph $G_0$;
otherwise, let $Z_1$ be an arbitrary subgraph of $G_1$ consisting of a single vertex.  Without loss of generality, we can assume that the vertex
of $Z_1$ is incident with the outer face of $G_1$.

If all $6$-cycles in $G_1$ bound faces, then let $G=G_1$ and $Z=Z_1$.  Otherwise, let $Z$ be a $6$-cycle in $G_1$ such that the open disk
$\Delta_Z$ bounded by $Z$ is not a face of $G_1$, and $\Delta_Z$ is minimal among the cycles $Z$ with this property.
Let $G$ be the subgraph of $G_1$ drawn in the closure of $\Delta_Z$.  Note that $G$ is $2$-connected and every $6$-cycle in $G$ bounds a face.

Let $\ch_0$ be the assignment of charges to vertices and faces of $G$ described at the beginning of the section.
Recall that the sum of these charges is negative if $|V(Z)|=1$ and $0$ if $Z$ is a $6$-cycle.  Redistributing the charge according to the rules (T0)---(T2)
gives us the charge assignment $\ch_1$ with the same sum of charges, and thus either some vertex or face has negative final charge,
or $Z$ is a $6$-cycle and all vertices and faces have final charge $0$.
Lemma~\ref{lemma-finsix} implies that $G$ contains a $3$-reducible induced subgraph $H$ disjoint from $Z$ with at most $29$ vertices.
Since $H$ is disjoint from $Z$, $H$ is also a $3$-reducible induced subgraph of $G_0$.
\end{proof}

\bibliographystyle{siam}
\bibliography{req-trfree}

\begin{thebibliography}{1}

\bibitem{req-trfree}
{\sc Z.~Dvo{\v{r}}{\'a}k, T.~Masa{\v{r}}{\'\i}k, J.~Mus{\'\i}lek, and
  O.~Pangr{\'a}c}, {\em Flexibility of triangle-free planar graphs}, arXiv,
  1902.02971 (2017).

\bibitem{requests}
{\sc Z.~Dvo{\v{r}}{\'a}k, S.~Norin, and L.~Postle}, {\em List coloring with
  requests}, arXiv, 1612.08698 (2016).

\bibitem{dotri}
{\sc Z.~Dvo\v{r}\'ak and J.-S. Sereni}, {\em Do triangle-free planar graphs
  have exponentially many $3$-colorings?}, Electronic Journal of Combinatorics,
  24 (2017), p.~P3.47.

\bibitem{thomassen1995-34}
{\sc C.~Thomassen}, {\em 3-list-coloring planar graphs of girth 5}, J. Combin.
  Theory, Ser.~B, 64 (1995), pp.~101--107.

\bibitem{Thomassen97}
\leavevmode\vrule height 2pt depth -1.6pt width 23pt, {\em Color-critical
  graphs on a fixed surface}, J. Combin. Theory, Ser.~B, 70 (1997),
  pp.~67--100.

\bibitem{voigt1993}
{\sc M.~Voigt}, {\em List colourings of planar graphs}, Discrete Math., 120
  (1993), pp.~215--219.

\bibitem{voigt1995}
\leavevmode\vrule height 2pt depth -1.6pt width 23pt, {\em A not 3-choosable
  planar graph without 3-cycles}, Discrete Math., 146 (1995), pp.~325--328.

\end{thebibliography}

\end{document}